\newtheorem{theorem}{Theorem}[section]
\newtheorem{lemma}[theorem]{Lemma}
\newtheorem{assumption}[theorem]{Assumption}
\newcommand{\eps}{\varepsilon}
\newcommand{\ol}{\overline}
\newcommand{\onevec}{\mathbf{1}_n}
\newcommand{\cset}{\mathcal{C}}
\newcommand{\circledOne}{\text{\ding{172}}}
\renewcommand{\le}{\leqslant}
\renewcommand{\ge}{\geqslant}
\newcommand{\numberthis}{\addtocounter{equation}{1}\tag{\theequation}}
\DeclareMathOperator*{\argmin}{arg\,min}
\newcommand{\R}{\mathbb{R}}
\newcommand{\Z}{\mathbb{Z}}
\newcommand{\mA}{{\bf A}}
\newcommand{\mU}{{\bf U}}
\newcommand{\mV}{{\bf V}}
\newcommand{\mW}{{\bf W}}
\newcommand{\mX}{{\bf X}}
\newcommand{\mY}{{\bf Y}}
\newcommand{\aX}{\overline{\bf X}}
\newcommand{\aY}{\overline{\bf Y}}
\newcommand{\aU}{\overline{\bf U}}
\newcommand{\aV}{\overline{\bf V}}
\newcommand{\muav}{\mu_{g}}
\newcommand{\Lav}{L_{g}}
\newcommand{\Lmax}{L_{l}}
\newcommand{\mumin}{\mu_{l}}
\newcommand{\ds}{\displaystyle}
\newcommand{\norm}[1]{\left\| #1 \right\|}
\newcommand{\angles}[1]{\left\langle #1 \right\rangle}
\newcommand{\cbraces}[1]{\left( #1 \right)}
\newcommand{\sbraces}[1]{\left[ #1 \right]}
\newcommand{\braces}[1]{\left\{ #1 \right\}}
\begin{document}

\title{Towards accelerated rates for distributed optimization over time-varying networks\thanks{The research of A. Rogozin was partially supported by RFBR 19-31-51001 and was partially done in Sirius (Sochi). The research of A. Gasnikov was partially supported by the Ministry of Science and Higher Education of the Russian Federation (Goszadaniye) 075-00337-20-03, project no. 0714-2020-0005.}
}

\author{
    Alexander Rogozin \\
    Moscow Institute of Physics and Technology \\
    Dolgoprudny, Russia \\
    \texttt{aleksandr.rogozin@phystech.edu} \\
    \And
    Vladislav Lukoshkin \\
    Skolkovo Institute of Science and Technology \\
    Moscow, Russia \\
    \texttt{lukoshkin@phystech.edu}
    \And
    Alexander Gasnikov \\
    Moscow Institute of Physics and Technology \\
    Dolgoprudny, Russia \\
    \texttt{gasnikov@yandex.ru}
    \And
    Dmitry Kovalev \\
    King Abdullah University of Science and Technology \\
    Thuwal, Saudi Arabia \\
    \texttt{dmitry.kovalev@kaust.edu.sa}
    \And
    Egor Shulgin \\
    King Abdullah University of Science and Technology \\
    Thuwal, Saudi Arabia \\
    \texttt{shulgin.ev@phystech.edu}
    \texttt{}
}

\renewcommand{\today}{\text{September 24, 2020}}

\maketitle


\begin{abstract}
	We study the problem of decentralized optimization with strongly convex smooth cost functions. This paper investigates accelerated algorithms under time-varying network constraints. In our approach, nodes run a multi-step gossip procedure after taking each gradient update, thus ensuring approximate consensus at each iteration. The outer cycle is based on accelerated Nesterov scheme. Both computation and communication complexities of our method have an optimal dependence on global function condition number $\kappa_g$. In particular, the algorithm reaches an optimal computation complexity $O(\sqrt{\kappa_g}\log(1/\varepsilon))$.
	\keywords{distributed optimization \and time-varying network}
\end{abstract}

\section{Introduction}

In this work, we study a sum-type minimization problem
\begin{align}\label{eq:initial_problem}
f(x) = \frac{1}{n}\sum_{i=1}^n f_i(x) \to \min_{x\in\R^d}.
\end{align}
Convex functions $f_i$ are stored separately by nodes in communication network, which is represented by an undirected graph $\mathcal{G} = (V, E)$. This type of problems arise in distributed machine learning, drone or satellite networks, statistical inference \cite{nedic2017fast} and power system control \cite{ram2009distributed}. The computational agents over the network have access to their local $f_i$ and can communicate only with their neighbors, but still aim to minimize the global objective in \eqref{eq:initial_problem}.

The basic idea behind approach of this paper is to reformulate problem \eqref{eq:initial_problem} as a problem with linear constraints. Let us assign each agent in the network a personal copy of parameter vector $x_i$ and introduce
\begin{align*}
\mX = \begin{pmatrix} x_1^\top \\ \vdots \\ x_n^\top \end{pmatrix} \in \R^{n\times d},~
F(\mX) = \sum_{i=1}^n f_i(x_i).
\end{align*}
Now we equivalently rewrite problem \eqref{eq:initial_problem} as
\begin{align}\label{eq:problem_linear_constraints}
F(\mX) = \sum_{i=1}^n f_i(x_i) \to \min_{x_1 = \ldots = x_n}.
\end{align}
This reformulation increases the number of variables, but induces additional constraints at the same time. Problem \eqref{eq:problem_linear_constraints} has the same optimal value as problem \eqref{eq:initial_problem}.

Let us denote the set of consensus constraints $\cset = \{x_1 = \ldots = x_n\}$. Also, for each $\mX\in\R^{d\times n}$ denote average of its columns $\ol x = \frac{1}{n}\sum_{i=1}^n x_i$ and introduce its projection onto constraint set.
\begin{align*}
\ol\mX = \frac{1}{n} \onevec\onevec^\top \mX = \Pi_{\cset}(\mX) = \begin{pmatrix} \ol x^\top \\ \vdots \\ \ol x^\top \end{pmatrix}.
\end{align*}
Note that $\cset$ is a linear subspace in $\R^{n\times d}$, and therefore projection operator $\Pi_\cset(\cdot)$ is linear.

Decentralized optimization methods aim at minimizing the objective function and maintaining consensus accuracy between nodes. The optimization part is performed by using gradient steps. At the same time, keeping every agent's parameter vector close to average over the nodes is done via communication steps. Alternating gradient and communication updates allows both to minimize the objective and control consensus constraint violation.

In centralized scenario, there exists a server which is able to communicate with every agent in the network. In particular, a common parameter vector is maintained at all of the nodes. However, in decentralized setting it is only possible to ensure that agent's vectors are approximately equal with desired accuracy. The algorithm studied in this paper runs a sequence of communication rounds after every optimization step. We refer to this series of communications as \textit{consensus subroutine}. Such information exchange allows to reach approximate consensus between nodes after each gradient update, while the accuracy is controlled by the number of communication rounds.

On the one hand, a method which employs a consensus subroutine after each gradient update mimics a centralized algorithm. The difference is that in presence of a master node all computational entities have access to a common variable, while in decentralized case consensus constraints are satisfied only with nonzero accuracy. On the other hand, consensus subroutine may be interpreted as an inexact projection onto the constraint set $\cset$. Every communication round is a step towards the projection. Therefore, our approach fits the inexact oracle framework, which has been studied in \cite{devolder2014first,devolder2013first}. We note that a similar approach to decentralized optimization is studied in \cite{jakovetic2014fast}.

We aim at building a first-order method with trajectory lying in neighborhood of $\cset$. A simple example would be GD with inexact projections.
\begin{align}\label{eq:example_gd}
\mX^{k+1} \approx \Pi_\cset(\mX^k - \gamma\nabla F(\mX^k)) = \ol\mX^k - \gamma \ol{\nabla F}(\mX^k),
\end{align}
where $\nabla F(\mX^k) = (\nabla f_1(x_1^k)\ldots \nabla f_n(x_n^k))^\top$ denotes the gradient of $F$.

Algorithm with update rule \ref{eq:example_gd} can be viewed as a gradient descent with inexact oracle. If the oracle was exact, the update rule would write as
\begin{align*}
\ol\mX^{k+1} = \ol\mX^k - \gamma\ol{\nabla F}(\ol\mX^k),
\end{align*}
thus making the method trajectory stay precisely in $\cset$. In this particular example, inexact gradient $\ol{\nabla F}(\mX^k)$ approximates exact gradient $\ol{\nabla F}(\ol\mX^k)$.

Throughout the paper, $\angles{\cdot, \cdot}$ denotes the inner product of vectors or matrices. Correspondingly, by $\norm{\cdot}$ we denote a $2$-norm for vectors or Frobenius norm for matrices.

\subsection{Related work}

A decentralized algorithm makes two types of steps: local updates and information exchange. The complexity of such methods depends on objective condition number $\kappa$ and a term $\chi$ representing graph connectivity.

Local steps may use gradient \cite{Nedic2017achieving,scaman2018optimal,Pu2018,Qu2017,shi2015extra,ye2020multi,li2018sharp} or sub-gradient \cite{Nedic2009} computations. In primal-only methods, the agents compute gradients of their local functions and alternate taking gradient steps and communication procedures. Under cheap communication costs, it may be beneficial to replace a single consensus iteration with a series of information exchange rounds. Such methods as MSDA \cite{scaman2017optimal}, D-NC \cite{Jakovetic} and Mudag \cite{ye2020multi} employ multi-step gossip procedures.

Typically, non-accelerated methods need $O(\kappa\chi\log(1/\eps))$ iterations to yield a solution with $\eps$-accuracy \cite{rogozin2019projected}. Nesterov acceleration may be employed to improve dependence on $\kappa$ or $\chi$ and obtain algorithms with $O(\sqrt{\kappa\chi}\log(1/\eps))$ complexity. In order to achieve this, one may distribute accelerated methods directly \cite{Qu2017,ye2020multi,li2018sharp,Jakovetic,dvinskikh2019decentralized} or use a Catalyst framework \cite{li2020revisiting}. Accelerated methods meet the lower complexity bounds for decentralized optimization \cite{hendrikx2020optimal,li2020optimal,scaman2017optimal}.

Consensus restrictions $x_1 = \ldots = x_n$ may be treated as linear constraints, thus allowing for a dual reformulation of problem \eqref{eq:initial_problem}. Dual-based methods include dual ascent and its accelerated variants \cite{scaman2017optimal,Wu2017,Zhang2017,uribe2020dual}. Primal-dual approaches like ADMM \cite{arjevani2020ideal,wei2012distributed} are also implementable in decentralized scenarios.

In \cite{scaman2018optimal}, the authors developed algorithms for non-smooth convex objectives and provided lower complexity bounds for non-smooth convex case, as well.

Time-varying networks open a new venue in research. Changing topology requires new approaches to decentralized methods and a more complicated theoretical analysis. The first method with provable geometric convergence was proposed in \cite{Nedic2017achieving}. Such primal algorithms as Push-Pull Gradient Method \cite{Pu2018} and DIGing \cite{Nedic2017achieving} are robust to network changes and have theoretical guarantees of convergence over time-varying graphs. Recently, a dual method for time-varying architectures was introduced in \cite{Maros2018}.

\subsection{Summary of contributions}

Our approach uses multi-step gossip averaging, and the analysis is based on the inexact oracle framework.

The proposed algorithm (Algorithm \ref{alg:decentralized_agd}) requires $O(\sqrt{\kappa_g} \chi \log^2(1/\eps))$, where $\kappa_g$ denotes the (global) condition number of $f$ and $\chi$ is a term characterizing graph connectivity, which is defined later in the paper. For a static graph, $\chi = 1 / \gamma$, where $\gamma$ denotes the normalized eigengap of communication matrix associated with the network. Our result has an accelerated rate on function condition number and is derived for time-varying networks.

Our complexity estimate includes the condition number $\kappa_g$, i.e. \textit{global} strong convexity and smoothness constants instead of \textit{local} ones. If the data among the nodes is strongly heterogeneous, it is possible that $\kappa_g \gg \kappa_l$, where $\kappa_l$ is the local condition number \cite{scaman2017optimal,hendrikx2020optimal,tang2019practicality}. Therefore, the method with complexity depending on $\kappa_g$ may perform significantly better. A recently proposed method Mudag \cite{ye2020multi} has a complexity depending on $\sqrt{\kappa_g}$, as well, but the method is designed for time-static graphs.

The lower bound for number of communications is $\Omega(\sqrt{\kappa_l \chi}\log(1/\eps))$ \cite{scaman2017optimal}. Our result is obtained for time-varying graphs and has a worse dependence on $\chi$. On the other hand, we derive a better dependence on condition number, i.e. we use $\kappa_g$ instead of $\kappa_l$. However, this by no means breaks the lower bounds. First, our result includes $\log^2(1/\eps)$ instead of $\log(1/\eps)$. Second, a function in \cite{scaman2017optimal} on which the lower bounds are attained has $\kappa_g\sim \kappa_l$. Namely, for the bad function it holds $\kappa_g\ge \kappa_l/16$ (see Appendix A.1 in \cite{scaman2017optimal} for details).

\section{Inexact oracle framework}

In this section, we describe the inexact oracle construction for objective function $f$.

\subsection{Preliminaries}

Initially we recall the definition of $(\delta, L, \mu)$-oracle from \cite{devolder2013first}. Let $h(x)$ be a convex function defined on a convex set $Q\subseteq\R^m$. We say that $(h_{\delta,L,\mu}(x), s_{\delta,L,\mu}(x))$ is a $(\delta, L, \mu)$-model of $h(x)$ at point $x\in Q$ if for all $y\in Q$ it holds
\begin{align}\label{eq:inexact_oracle_def_devolder}
\frac{\mu}{2}\norm{y - x}^2 \le h(y) - \cbraces{h_{\delta,L,\mu}(x) + \angles{s_{\delta,L,\mu}(x), y - x}} \le \frac{L}{2} \norm{y - x}^2 + \delta.
\end{align}


\subsection{Inexact oracle for f}

Consider $\ol x, \ol y\in\R^{d}$ and define 
\begin{align*}
\ol\mX = \begin{pmatrix} \ol x^\top \\ \vdots \\ \ol x^\top \end{pmatrix},~ \ol\mY = \begin{pmatrix} \ol y^\top \\ \vdots \\ \ol y^\top \end{pmatrix} \in \cset.
\end{align*}
Let $\mX\in\R^{n\times d}$ be such that $\Pi_\cset (\mX) = \ol\mX$ and $\norm{\ol\mX - \mX}^2\le \delta'$.

\begin{lemma}\label{lem:inexact_oracle}
	Define
	\begin{align*}
	\delta &= \frac{1}{2n}\cbraces{\frac{\Lmax^2}{\Lav} + \frac{2\Lmax^2}{\muav} + \Lmax - \mumin} \delta', \numberthis\label{eq:delta_inexact_oracle} \\
	f_{\delta, L, \mu}(\ol x, \mX) &= \frac{1}{n} \sbraces{F(\mX) + \angles{\nabla F(\mX), \ol\mX - \mX} + \frac{1}{2}\cbraces{\mumin - \frac{2\Lmax^2}{\muav}}\norm{\ol\mX - \mX}^2}, \\
	g_{\delta, L, \mu}(\ol x, \mX) &= \frac{1}{n}\sum_{i=1}^n \nabla f_i(x_i).
	\end{align*}
	Then $(f_{\delta,L,\mu}(\ol x, \mX), g_{\delta,L,\mu}(\ol x, \mX))$ is a $(\delta,2\Lav,\muav/2)$-model of $f$ at point $\ol x$, i.e.
	
	\begin{align*}
	\frac{\muav}{4}\norm{\ol y - \ol x}^2 \le f(\ol y) - f_{\delta, L, \mu}(\ol x, \mX) - \angles{g_{\delta, L, \mu}(\ol x, \mX), \ol y - \ol x} \le \Lav\norm{\ol y - \ol x}^2 + \delta.
	\end{align*}
\end{lemma}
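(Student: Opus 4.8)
The plan is to control the central quantity $M := f(\ol y) - f_{\delta,L,\mu}(\ol x,\mX) - \angles{g_{\delta,L,\mu}(\ol x,\mX), \ol y - \ol x}$ by splitting it into three natural error terms. Abbreviating $g := g_{\delta,L,\mu}(\ol x,\mX) = \frac1n\sum_{i=1}^n \nabla f_i(x_i)$ and using that the exact gradient is $\nabla f(\ol x) = \frac1n\sum_{i=1}^n \nabla f_i(\ol x)$, I would write
\begin{align*}
f(\ol y) - f_{\delta,L,\mu}(\ol x,\mX) - \angles{g, \ol y - \ol x} = A + B + C,
\end{align*}
where $A = f(\ol y) - f(\ol x) - \angles{\nabla f(\ol x), \ol y - \ol x}$ is the global Bregman divergence of $f$, $B = f(\ol x) - f_{\delta,L,\mu}(\ol x,\mX)$ measures how far the model value sits below $f(\ol x)$, and $C = \angles{\nabla f(\ol x) - g, \ol y - \ol x}$ is the gradient-mismatch cross term. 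Since $f$ is globally $\muav$-strongly convex and $\Lav$-smooth, the first term is pinned between $\frac{\muav}{2}\norm{\ol y - \ol x}^2$ and $\frac{\Lav}{2}\norm{\ol y - \ol x}^2$ by the standard two-sided quadratic estimates.

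The heart of the argument is the bound on $B$. The key observation is that $F(\mX) = \sum_i f_i(x_i)$ is separable, so on $\R^{n\times d}$ with the Frobenius norm it is $\mumin$-strongly convex and $\Lmax$-smooth, with $F(\ol\mX) = n f(\ol x)$. Applying these two inequalities to the pair $(\mX,\ol\mX)$ sandwiches the linear part $F(\mX) + \angles{\nabla F(\mX), \ol\mX - \mX}$ between $F(\ol\mX) - \frac{\Lmax}{2}\norm{\ol\mX - \mX}^2$ and $F(\ol\mX) - \frac{\mumin}{2}\norm{\ol\mX - \mX}^2$. Substituting this into the definition of $f_{\delta,L,\mu}$, the correction term $\frac{1}{2n}\cbraces{\mumin - \frac{2\Lmax^2}{\muav}}\norm{\ol\mX - \mX}^2$ is tuned so that the strong-convexity side collapses to $f_{\delta,L,\mu}(\ol x,\mX) \le f(\ol x) - \frac{\Lmax^2}{n\muav}\norm{\ol\mX - \mX}^2$, giving $B \ge \frac{\Lmax^2}{n\muav}\norm{\ol\mX - \mX}^2 \ge 0$, while the smoothness side yields the matching upper estimate $B \le \frac{1}{2n}\cbraces{\Lmax - \mumin + \frac{2\Lmax^2}{\muav}}\norm{\ol\mX - \mX}^2$.

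For $C$, I would bound $\norm{\nabla f(\ol x) - g} \le \frac1n\sum_i \norm{\nabla f_i(\ol x) - \nabla f_i(x_i)} \le \frac{\Lmax}{n}\sum_i \norm{x_i - \ol x} \le \frac{\Lmax}{\sqrt n}\norm{\mX - \ol\mX}$ using $\Lmax$-smoothness of each $f_i$ followed by Cauchy--Schwarz, so that $|C| \le \frac{\Lmax}{\sqrt n}\norm{\mX - \ol\mX}\norm{\ol y - \ol x}$. To finish the \emph{upper} bound I would take $A \le \frac{\Lav}{2}\norm{\ol y - \ol x}^2$, the upper estimate for $B$, and split the cross term by Young's inequality $C \le \frac{\Lav}{2}\norm{\ol y - \ol x}^2 + \frac{\Lmax^2}{2\Lav n}\norm{\mX - \ol\mX}^2$; collecting the terms quadratic in $\norm{\ol y - \ol x}$ gives $\Lav\norm{\ol y - \ol x}^2$, and the remaining $\norm{\mX - \ol\mX}^2$ coefficient is exactly $\frac{1}{2n}\cbraces{\frac{\Lmax^2}{\Lav} + \frac{2\Lmax^2}{\muav} + \Lmax - \mumin}$, which after $\norm{\mX - \ol\mX}^2 \le \delta'$ produces precisely $\delta$. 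For the \emph{lower} bound I would take $A \ge \frac{\muav}{2}\norm{\ol y - \ol x}^2$, the lower estimate $B \ge \frac{\Lmax^2}{n\muav}\norm{\mX - \ol\mX}^2$, and $C \ge -\frac{\Lmax}{\sqrt n}\norm{\mX - \ol\mX}\norm{\ol y - \ol x}$, and then recognise that
\begin{align*}
&\frac{\muav}{4}\norm{\ol y - \ol x}^2 + \frac{\Lmax^2}{n\muav}\norm{\mX - \ol\mX}^2 - \frac{\Lmax}{\sqrt n}\norm{\mX - \ol\mX}\norm{\ol y - \ol x} \\
&\qquad = \cbraces{\frac{\sqrt{\muav}}{2}\norm{\ol y - \ol x} - \frac{\Lmax}{\sqrt{n\muav}}\norm{\mX - \ol\mX}}^2 \ge 0
\end{align*}
is a perfect square, leaving exactly the required $\frac{\muav}{4}\norm{\ol y - \ol x}^2$.

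The main obstacle is bookkeeping the constants: the coefficient $\mumin - \frac{2\Lmax^2}{\muav}$ in $f_{\delta,L,\mu}$ and the precise value of $\delta$ are reverse-engineered so that $B$ stays nonnegative with lower bound $\frac{\Lmax^2}{n\muav}\norm{\mX - \ol\mX}^2$ — the constant needed to complete the square — while the leftover $\norm{\mX - \ol\mX}^2$ coefficient after Young's inequality is absorbed exactly into $\delta$. Checking that the discriminant of the lower-bound quadratic vanishes, equivalently that $\bigl(\tfrac{\Lmax}{\sqrt n}\bigr)^2 = 4\cdot\tfrac{\muav}{4}\cdot\tfrac{\Lmax^2}{n\muav}$, is the one non-mechanical step, and it is what forces the factor $2$ in front of $\frac{\Lmax^2}{\muav}$ in both definitions.
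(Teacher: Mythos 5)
Your proof is correct and follows essentially the same route as the paper's: the paper likewise combines the local $(\mumin,\Lmax)$ two-sided Bregman bounds for the pair $(\mX,\ol\mX)$ with the global $(\muav,\Lav)$ bounds for the consensus pair, and controls exactly the same gradient-mismatch cross term $\angles{\ol{\nabla F}(\ol\mX)-\ol{\nabla F}(\mX),\ol\mY-\ol\mX}$ via Lipschitzness of the gradients plus Young's inequality, which is precisely your perfect-square step (Young with parameter $\muav/2$) and your upper-bound split (Young with parameter $\Lav$). The only cosmetic differences are that the paper runs the argument in the lifted space $\R^{n\times d}$ and keeps the Young parameters $p,q$ free until the final choice $q=\Lav$, $p=\muav/2$, whereas you work directly in $\R^d$ with the constants fixed from the start.
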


\begin{proof}
	We aim at obtaining estimates for $F(\aY)$ similar to \eqref{eq:inexact_oracle_def_devolder}. First, we get a lower bound on $F(\ol\mY)$.
	\begin{align*}
	F(\ol\mY)
	&\ge F(\mX) + \sbraces{\angles{\nabla F(\mX), \ol\mX - \mX} + \frac{\mumin}{2}\norm{\mX - \ol\mX}^2} + \sbraces{\angles{\ol{\nabla F}(\ol\mX), \ol\mY - \ol\mX} + \frac{\muav}{2}\norm{\ol\mY - \ol\mX}^2} \\
	&= \sbraces{F(\mX) + \angles{\nabla F(\mX), \ol\mX - \mX} + \frac{\mumin}{2}\norm{\mX - \ol\mX}^2} + \angles{\ol{\nabla F}(\mX), \ol\mY - \ol\mX} \\
	&\qquad + \angles{\ol{\nabla F}(\ol\mX) - \ol{\nabla F}(\mX), \ol\mY - \ol\mX} + \frac{\muav}{2}\norm{\ol\mY - \ol\mX}^2. \numberthis\label{eq:inexact_oracle_lower_bound_1}
	\end{align*}
	Let us lower bound the term $\angles{\ol{\nabla F}(\ol\mX) - \ol{\nabla F}(\mX), \ol\mY - \ol\mX}$ using Young inequality $\angles{a, b}\le \frac{\norm{a}^2}{2p} + \frac{p}{2}\norm{b}^2,~ p > 0$.
	\begin{align*}
	\angles{\ol{\nabla F}(\ol\mX) - \ol{\nabla F}(\mX), \ol\mY - \ol\mX}
	&\ge -\frac{1}{2p}\norm{\ol{\nabla F}(\ol\mX) - \ol{\nabla F}(\mX)}^2 - \frac{p}{2}\norm{\ol\mY - \ol\mX}^2 \\ 
	&\ge -\frac{\Lmax^2}{2p}\norm{\ol\mX - \mX}^2 - \frac{p}{2}\norm{\ol\mY - \ol\mX}^2.
	\end{align*}
	Returning to \eqref{eq:inexact_oracle_lower_bound_1} and noting that $\angles{\ol{\nabla F}(\mX), \aY - \aX} = \angles{\nabla F(\mX), \aY - \aX}$, we get
	\begin{align*}
	F(\ol\mY) &\ge \sbraces{F(\mX) + \angles{\nabla F(\mX), \ol\mX - \mX} + \frac{1}{2}\cbraces{\mumin - \frac{\Lmax^2}{p}}\norm{\mX - \ol\mX}^2} \\
	&\qquad + \angles{\nabla F(\mX), \ol\mY - \ol\mX} + \frac{\muav - p}{2}\norm{\ol\mY - \ol\mX}^2 \numberthis\label{eq:inexact_oracle_lower_bound_2}
	\end{align*}
	
	Second, we get an upper estimate on $F(\ol\mY)$.
	\begin{align*}
	F(\ol\mY)
	&\le \sbraces{F(\mX) + \angles{\nabla F(\mX), \ol\mX - \mX} + \frac{\Lmax}{2}\norm{\ol\mX -\mX}^2} + \sbraces{\angles{\ol{\nabla F}(\ol\mX), \ol\mY - \ol\mX} + \frac{\Lav}{2}\norm{\ol\mY - \ol\mX}^2} \\
	&= \sbraces{F(\mX) + \angles{\nabla F(\mX), \ol\mX - \mX} + \frac{\Lmax}{2}\norm{\ol\mX -\mX}^2} + \angles{\ol{\nabla F}(\mX), \ol\mY - \ol\mX} \\
	&\qquad + {\angles{\ol{\nabla F}(\ol\mX) - \ol{\nabla F}(\mX), \ol\mY - \ol\mX}} + \frac{\Lav}{2}\norm{\ol\mY - \ol\mX}^2. \numberthis \label{eq:inexact_oracle_upper_bound_1}
	\end{align*}
	Analogously, we estimate the term $\angles{\ol{\nabla F}(\ol\mX) - \ol{\nabla F}(\mX), \ol\mY - \ol\mX}$ with Young inequality.
	\begin{align*}
	\angles{\ol{\nabla F}(\ol\mX) - \ol{\nabla F}(\mX), \ol\mY - \ol\mX}
	&\le \frac{1}{2q}\norm{\ol{\nabla F}(\ol\mX) - \ol{\nabla F}(\mX)}^2 + \frac{q}{2}\norm{\ol\mY - \ol\mX}^2 \\
	&\le \frac{\Lmax^2}{2q}\norm{\ol\mX - \mX}^2 + \frac{q}{2}\norm{\ol\mY - \ol\mX}^2,~ q > 0.
	\end{align*}
	Plugging it into \eqref{eq:inexact_oracle_upper_bound_1} and once again using $\angles{\ol{\nabla F}(\mX), \aY - \aX} = \angles{\nabla F(\mX), \aY - \aX}$ yields
	\begin{align*}
	F(\ol\mY)&\le \sbraces{F(\mX) + \angles{\nabla F(\mX), \ol\mX - \mX} + \frac{1}{2}\cbraces{\mumin - \frac{\Lmax^2}{p}}\norm{\mX - \ol\mX}^2} \\
	&\qquad + \angles{\nabla F(\mX), \ol\mY - \ol\mX} + \frac{\Lav + q}{2}\norm{\ol\mY - \ol\mX}^2 \\ 
	&\qquad + \frac{1}{2}\cbraces{\frac{\Lmax^2}{q} + \frac{\Lmax^2}{p} + \Lmax - \mumin}\norm{\mX - \ol\mX}^2 \numberthis\label{eq:inexact_oracle_upper_bound_2}
	\end{align*}
	Consequently, smoothness and strong convexity constants for inexact oracle are $L = \Lav + q,~ \mu = \muav - p$, respectively. Choosing $q$ and $p$ allows to control condition number $L/\mu$. Letting $q = \Lav,~ p = \muav / 2$ leads to
	\begin{subequations}\label{eq:L_mu_def}
		\begin{align}
		L &= 2\Lav, \\
		\mu &= \muav / 2.
		\end{align}
	\end{subequations}
	Noting that 
	\begin{align*}
	&F(\ol\mX) = nf(\ol x),~ F(\ol\mY) = nf(\ol y), \\
	&\norm{\ol\mY - \ol\mX}^2 = n\norm{\ol y - \ol x}^2, \\
	&\angles{\ol{\nabla F}(\mX), \ol\mY - \ol\mX} = n\angles{g_{\delta, L, \mu}(\ol x), \ol y - \ol x}
	\end{align*} 
	and combining \eqref{eq:inexact_oracle_lower_bound_2} and \eqref{eq:inexact_oracle_upper_bound_2} leads to
	\begin{align*}
	\frac{\mu}{2}\norm{\ol y - \ol x}^2 \le f(\ol y) - f_{\delta, L, \mu}(\ol x, \mX) - \angles{g_{\delta, L, \mu}(\ol x, \mX), \ol y - \ol x} \le \frac{L}{2}\norm{\ol y - \ol x}^2 + \delta, 
	\end{align*}
	which concludes the proof.
\end{proof}

\section{Algorithm and results}

We take Algorithm 2 from \cite{stonyakin2020inexact} as a basis for our method. The algorithm is designed for the inexact oracle model and achieves an accelerated rate.
\begin{algorithm}[H]
	\caption{Decentralized AGD with consensus subroutine}
	\label{alg:decentralized_agd}
	\begin{algorithmic}[1]
		\REQUIRE{Initial guess $\mX^0\in \cset$, constants $L, \mu > 0$, $\mU^0 = \mX^0$, $\alpha^0 = \mA^0 = 0$}
		\FOR{$k = 0, 1, 2,\ldots$}
		\STATE{Find $\alpha^{k+1}$ as the greater root of \\$(A^k + \alpha^{k+1})(1 + A^k \mu) = L(\alpha^{k+1})^2$}
		\STATE{$A^{k+1} = A^k + \alpha^{k+1}$}
		\STATE{$\ds \mY^{k+1} = \frac{\alpha^{k+1} \mU^k + A^k \mX^k}{A^{k+1}}$}
		\STATE{\label{alg_step:agd_step}$\mV^{k+1} = \frac{\alpha^{k+1}\mu\mY^{k+1} + (1 + A^k\mu)\mU^k}{1 + A^{k+1}\mu} - \frac{\alpha^{k+1}\nabla F(\mY^{k+1})}{1 + A^{k+1}\mu}$}
		\STATE{\label{alg_step:consensus_update}
			$
			\mU^{k+1} = \text{Consensus}(\mV^{k+1}, T^k)
			$
		}
		\STATE{$\ds \mX^{k+1} = \frac{\alpha^{k+1} \mU^{k+1} + A^k \mX^k}{A^{k+1}}$}
		\ENDFOR
	\end{algorithmic}
\end{algorithm}
\vspace{-0.5cm}
\begin{algorithm}[H]
	\caption{Consensus}
	\label{alg:consensus}
	\begin{algorithmic}
		\REQUIRE{Initial $\mX^0\in\cset$, number of iterations $T$.}
		\FOR{$t = 1, \ldots, T$}
		\STATE{$\mX^{t+1} = \mW^t \mX^t$}
		\ENDFOR
	\end{algorithmic}
\end{algorithm}

\subsection{Consensus}
We consider a sequence of non-directed communication graphs $\braces{\mathcal{G}^k = (V, E^k)}_{k=0}^\infty$ and a sequence of corresponding mixing matrices $\braces{\mW^k}_{k=0}^\infty$ associated with it. We impose the following 
\begin{assumption}\label{assum:mixing_matrix}
	Mixing matrix sequence $\braces{\mW^k}_{k=0}^\infty$ satisfies the following properties.
	\begin{itemize}
		\item (Decentralized property) If $(i, j)\notin E_k$, then $[\mW^k]_{ij} = 0$.
		\item (Double stochasticity) $\mW^k \onevec = \onevec,~ \onevec^\top\mW^k = \onevec^\top$.
		\item (Contraction property) There exist $\tau\in\Z_{++}$ and $\lambda\in(0, 1)$ such that for every $k\ge \tau - 1$ it holds
		\begin{align*}
		\norm{\mW_{\tau}^k \mX - \aX} \le (1 - \lambda)\norm{\mX - \aX}, 
		\end{align*}
		where $\mW_\tau^k = \mW^k \ldots \mW^{k-\tau+1}$.
	\end{itemize}
\end{assumption}

The contraction property in Assumption \ref{assum:mixing_matrix} generalizes several assumptions in the literature.
\begin{itemize}
    \item Time-static connected graph: $\mW^k = \mW$. In this classical case we have $\lambda = 1 - \sigma_2(\mW)$, where $\sigma_2(\mW)$ denotes the second largest singular value of $\mW$.
    \item Sequence of connected graphs: every $\mathcal{G}_k$ is connected. In this scenario $\lambda = 1 - \underset{k\ge 0}{\sup}~\sigma_2(\mW^k)$.
    \item $\tau$-connected graph sequence (i.e. for every $k\ge 0$ graph $\mathcal{G}^k_\tau = (V, E^k\cup E^{k+1}\cup\ldots\cup E^{k+\tau-1})$ is connected \cite{Nedic2017achieving}). For $\tau$-connected graph sequences it holds $1 - \lambda = \underset{k\ge 0}{\sup}~\sigma_{\max}(\mW_\tau^k - \frac{1}{n}\onevec\onevec^\top)$.
\end{itemize}
A stochastic variant of this contraction property is also studied in \cite{koloskova2020unified}.


During every communication round, the agents exchange information according to the rule
\begin{align*}
x_i^{k+1} = w_{ii}^k + \sum_{(i, j)\in E^k} w_{ij}^k x_j^k.
\end{align*}
In matrix form, this update rule writes as $\mX^{k+1} = \mW^k \mX^k$. The contraction property in Assumption \ref{assum:mixing_matrix} is needed to ensure geometric convergence of Algorithm \ref{alg:consensus} to the average of nodes' initial vectors, i.e. to $\ol x^0$. In particular, the contraction property holds for $\tau$-connected graphs with Metropolis weights choice for $\mW^k$, i.e.
\begin{align*}
[\mW^k]_{ij} = 
\begin{cases}
1 / (1 + \max\{d^k_i, d^k_j\}) &\text{if }(i, j)\in E^k, \\
0 &\text{if } (i, j)\notin E^k, \\
1 - \ds\sum_{(i, m)\in E^k} [\mW^k]_{im} &\text{if } i = j, 
\end{cases}
\end{align*}
where $d^k_i$ denotes the degree of node $i$ in graph $\mathcal{G}^k$.

\subsection{Complexity result for Algorithm \ref{alg:decentralized_agd}}

This paper focuses on smooth strongly convex objectives. Our analysis is bounded to the following 
\begin{assumption}\label{assum:str_convex_smooth}
	For every $i = 1,\ldots, n$, function $f_i$ is differentiable, $\mu_i$-strongly convex and $L_i$-smooth ($\mu_i,~ L_i > 0$).
\end{assumption}
Under this assumption it holds 
\begin{itemize}
	\item (local constants) $F(X)$ is $\mumin$-strongly convex and $\Lmax$-smooth on $\R^{n\times d}$, where $\ds \mumin = \min_i\mu_i,~ \Lmax = \max_i L_i$.
	\item (global constants) $F(X)$ is $\muav$-strongly convex and $\Lav$-smooth on $\cset$, where $\muav = \frac{1}{n}\sum_{i=1}^n\mu_i,~ \Lav = \frac{1}{n}\sum_{i=1}^n L_i$.
\end{itemize}
The global conditioning may be significantly better than local (see i.e. \cite{scaman2017optimal} for details). Our analysis shows that performance of Algorithm \ref{alg:decentralized_agd} depends on global constants.

In the next theorem, we provide computation and communication complexities of Algorithm \ref{alg:decentralized_agd}.
\begin{theorem}\label{th:total_iterations_strongly_convex}
	Choose some $\eps > 0$ and set
	\begin{align*}
	T_k = T = \frac{\tau}{2\lambda}\log\frac{D}{\delta'},~ \delta' = \frac{n\eps}{32} \frac{\muav^{3/2}}{\Lav^{1/2} \Lmax^2}.
	\end{align*}
	Also define
	\begin{subequations}\label{eq:log_coefs_strongly_conv}
		\begin{align*}
		D_1 &= \frac{\Lmax}{\Lav^{1/2}\muav} \sbraces{ 8\sqrt{2}\Lmax\norm{\ol u^0 - x^*}\cbraces{\frac{\Lav}{\muav}}^{3/4} + \frac{4\sqrt{2}\norm{\nabla F(\mX^*)}}{\sqrt{n}}\cbraces{\frac{\Lav}{\muav}}^{1/4} } \\
		D_2 &= \frac{\Lmax}{\Lav^{1/2}\muav} \sbraces{ 3\sqrt{\muav} + 4\sqrt{2n}\cbraces{\frac{\Lav}{\muav}}^{1/4} }
		\end{align*}
	\end{subequations}
	Then Algorithm \ref{alg:decentralized_agd} requires
	\begin{align}\label{eq:agd_computational_complexity}
	N = 2\sqrt{\frac{\Lav}{\muav}} \log\cbraces{\frac{\norm{\ol u^0 - x^*}^2}{2\eps \Lav}}
	\end{align}
	gradient computations at each node and
	\begin{align}\label{eq:agd_communication_complexity}
	N_{tot} = N\cdot T =  2\sqrt{\frac{\Lav}{\muav}} \frac{\tau}{\lambda} \cdot \log\cbraces{\frac{2\Lav\norm{\ol u^0 - x^*}^2}{\eps}} \log\cbraces{\frac{D_1}{\sqrt\eps} + D_2}
	\end{align}
	communication steps to yield $\mX^N$ such that
	\begin{align*}
	&f(\ol x^N) - f(x^*)\le \eps,~ \norm{\mX^N - \aX^N}^2\le \delta'.
	\end{align*}
\end{theorem}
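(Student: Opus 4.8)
\emph{Proof plan.} The strategy is to analyze Algorithm~\ref{alg:decentralized_agd} in the averaged (projected) coordinates. Because every $\mW^k$ is doubly stochastic, one gossip round leaves the column average of its argument unchanged, so $\aU^{k+1} = \aV^{k+1}$ and the consensus subroutine is completely transparent to the averaged iterates. Applying $\Pi_\cset$ to line~\ref{alg_step:agd_step} and to the definitions of $\mY^{k+1}, \mX^{k+1}$ therefore shows that the sequence $(\ol x^k, \ol u^k, \ol y^k)$ obeys \emph{exactly} the accelerated inexact-oracle recursion of \cite{stonyakin2020inexact}, driven by the model of Lemma~\ref{lem:inexact_oracle} with $L = 2\Lav$ and $\mu = \muav/2$, the model gradient being $g_{\delta,L,\mu}(\ol y^{k+1}, \mY^{k+1}) = \ol{\nabla F}(\mY^{k+1})$. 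The only job of the consensus subroutine is thus to keep the \emph{full} matrices $\delta'$-close to $\cset$ so that Lemma~\ref{lem:inexact_oracle} legitimately certifies the model at each $\ol y^{k+1}$. Consequently the argument splits into an outer convergence part and an inner consensus-invariant part.

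For the outer part I would invoke the convergence guarantee of the strongly convex inexact-oracle scheme. Provided the invariant below holds, Lemma~\ref{lem:inexact_oracle} makes $(f_{\delta,L,\mu}, g_{\delta,L,\mu})$ a genuine $(\delta, 2\Lav, \muav/2)$-model at each step, and the scheme yields a bound of the form
\begin{align*}
 f(\ol x^N) - f(x^*) \le \frac{\norm{\ol u^0 - x^*}^2}{2A^N} + C\delta,
\end{align*}
where $A^N$ grows geometrically with ratio $1 + \Theta(\sqrt{\muav/\Lav})$ and $C = O(\sqrt{\Lav/\muav})$ is the bounded error-amplification constant of the strongly convex method (here strong convexity is essential, as it prevents the oracle error from accumulating with $N$). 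Choosing $N$ as in \eqref{eq:agd_computational_complexity} drives the geometric term below $\eps/2$. Substituting $\delta$ from \eqref{eq:delta_inexact_oracle} and the stated $\delta' = \frac{n\eps}{32}\frac{\muav^{3/2}}{\Lav^{1/2}\Lmax^2}$ gives $\delta = \Theta(\eps\sqrt{\muav/\Lav})$, so that $C\delta \le \eps/2$; the specific numerical constant $1/32$ is reverse-engineered precisely to balance against $C$.

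The inner part rests on the invariant $\norm{\mX^k - \aX^k}^2 \le \delta'$, $\norm{\mU^k - \aU^k}^2 \le \delta'$ and $\norm{\mY^{k+1} - \aY^{k+1}}^2 \le \delta'$, proved by induction on $k$; the base case holds since $\mX^0 \in \cset$. Since $\mY^{k+1}$ is a convex combination of $\mU^k$ and $\mX^k$, and $\mX^{k+1}$ of $\mU^{k+1}$ and $\mX^k$, linearity of $\Pi_\cset$ and convexity of $\norm\cdot$ reduce the inductive step to bounding $\norm{\mU^{k+1} - \aU^{k+1}}$. Here double stochasticity gives $\aU^{k+1} = \aV^{k+1}$, so the subroutine only contracts the deviation from $\cset$: iterating the contraction property of Assumption~\ref{assum:mixing_matrix} over $T$ rounds and using $1-\lambda \le e^{-\lambda}$ yields $\norm{\mU^{k+1} - \aU^{k+1}} \le (1-\lambda)^{\lfloor T/\tau\rfloor}\norm{\mV^{k+1} - \aV^{k+1}}$, and with $T = \frac{\tau}{2\lambda}\log(D/\delta')$ this is at most $(\delta'/D)^{1/2}\norm{\mV^{k+1} - \aV^{k+1}}$. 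The invariant therefore closes as soon as the \emph{input} violation is uniformly bounded, $\norm{\mV^{k+1} - \aV^{k+1}}^2 \le D$.

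Establishing that uniform bound $D$ is the main obstacle. From line~\ref{alg_step:agd_step}, $\mV^{k+1} - \aV^{k+1}$ is a convex combination of $\mY^{k+1} - \aY^{k+1}$ and $\mU^k - \aU^k$ (each already $\le \sqrt{\delta'}$) minus $\tfrac{\alpha^{k+1}}{1 + A^k\mu + \mu}\cbraces{\nabla F(\mY^{k+1}) - \ol{\nabla F}(\mY^{k+1})}$; the gradient deviation is controlled by $\Lmax$-smoothness as $\norm{\nabla F(\mY^{k+1}) - \ol{\nabla F}(\mY^{k+1})} \le \Lmax\norm{\mY^{k+1} - \aY^{k+1}} + \norm{\nabla F(\aY^{k+1})}$, and $\norm{\nabla F(\aY^{k+1})} \le \Lmax\norm{\aY^{k+1} - \mX^*} + \norm{\nabla F(\mX^*)}$, where $\norm{\nabla F(\mX^*)} \ne 0$ in general because only the \emph{sum} of the local gradients vanishes at $x^*$. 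Bounding $\norm{\aY^{k+1} - \mX^*}$ uniformly along the trajectory (via the strong-convexity estimates of the accelerated scheme together with $\alpha^{k+1}/A^{k+1} = O(\sqrt{\muav/\Lav})$) converts these into the explicit constants $D_1, D_2$: the terms carrying $\norm{\ol u^0 - x^*}$ and $\norm{\nabla F(\mX^*)}$ scale like $\sqrt\eps$ once the target accuracy is imposed, which after dividing by $\delta' \propto \eps$ produces the $D_1/\sqrt\eps$ contribution, while the residual consensus terms give the additive $D_2$. Because the oracle quality sets $\delta'$, which through $D$ feeds back into $T$, this step must be organized as a \emph{simultaneous} induction on optimization progress and consensus accuracy rather than one after the other. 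Assembling the three parts, the consensus guarantee $\norm{\mX^N - \aX^N}^2 \le \delta'$ is exactly the invariant at $k = N$, the accuracy guarantee $f(\ol x^N) - f(x^*) \le \eps$ follows from the outer bound, and multiplying the $N$ of \eqref{eq:agd_computational_complexity} by $T = \frac{\tau}{2\lambda}\log(D/\delta')$ produces the communication complexity \eqref{eq:agd_communication_complexity} with the stated $D_1, D_2$.
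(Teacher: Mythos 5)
Your plan reproduces the paper's own proof essentially step for step: the reduction to averaged iterates via double stochasticity of $\mW^k$, the inexact-oracle accelerated convergence bound of \cite{stonyakin2020inexact} with $L = 2\Lav$, $\mu = \muav/2$ for the outer loop (with error amplification $O(\sqrt{\Lav/\muav})$ controlled by the $A^k$-sequence properties), the contraction-based consensus lemma requiring the uniform bound $\norm{\mV^{k+1} - \aV^{k+1}}^2 \le D$ built from $\Lmax$-smoothness and $\norm{\nabla F(\mX^*)} \ne 0$, and the simultaneous induction coupling consensus accuracy with optimization progress. The only deviations are cosmetic sketch-level imprecisions (the coefficient the paper actually bounds is $\alpha^{k+1}/(1+A^k\mu+\mu) \le 2/\sqrt{L\mu}$ rather than $\alpha^{k+1}/A^{k+1}$, and the $D_1/\sqrt\eps$ term arises from the $\eps$-independent parts of $\sqrt{D}$ divided by $\sqrt{\delta'} \propto \sqrt{\eps}$), so this is the same argument as the paper's.
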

We provide the proof of Theorem \ref{th:total_iterations_strongly_convex} in Appendix \ref{app:total_iterations_strongly_convex}.

The number of gradient computations in \eqref{eq:agd_computational_complexity} reaches the lower bounds for non-distributed optimization up to a constant factor. Number of communication steps includes an additional term of $\tau/\lambda$, which characterizes graph connectivity.

\section{Numerical experiments}

We consider the logistic regression problem with L2 regularizer:
\begin{align*}
    f(x) = \frac{1}{n}\sum\limits_{i=1}^n \log\left(1 + \exp(-b_i\angles{a_i, x})\right) + \frac{\theta}{2}\norm{x}_2^2.
\end{align*}
Here $a_1,\ldots,a_n\in\R^d$ denote the data points, $b_1,\ldots,b_n\in\{-1, 1\}$ denote class labels and $\theta > 0$ is a penalty coefficient.

Also we run experiments on a least-squares task:
\begin{align*}
    f(x) = \frac{1}{2}\norm{\mA x - b}_2^2.
\end{align*}
The blocks of data matrix $\mA$ and vector $b$ are distributed among the agents in the network.

The simulations are run on LIBSVM datasets \cite{Chang2011}. We compare the performance of Algorithm \ref{alg:decentralized_agd} (DAccGD in legend of plots) to EXTRA \cite{shi2015extra}, DIGing \cite{Nedic2017achieving}, Mudag \cite{ye2020multi} and APM-C \cite{li2020decentralized}.

Logistic regression is carried out on a9a data-set, inner iterations are set to $T = 5$ for Mudag and DAccGD. Generation of random geometric graph goes on $20$ nodes.

\begin{figure}[H]
	\centering
	\includegraphics[width = \textwidth]{./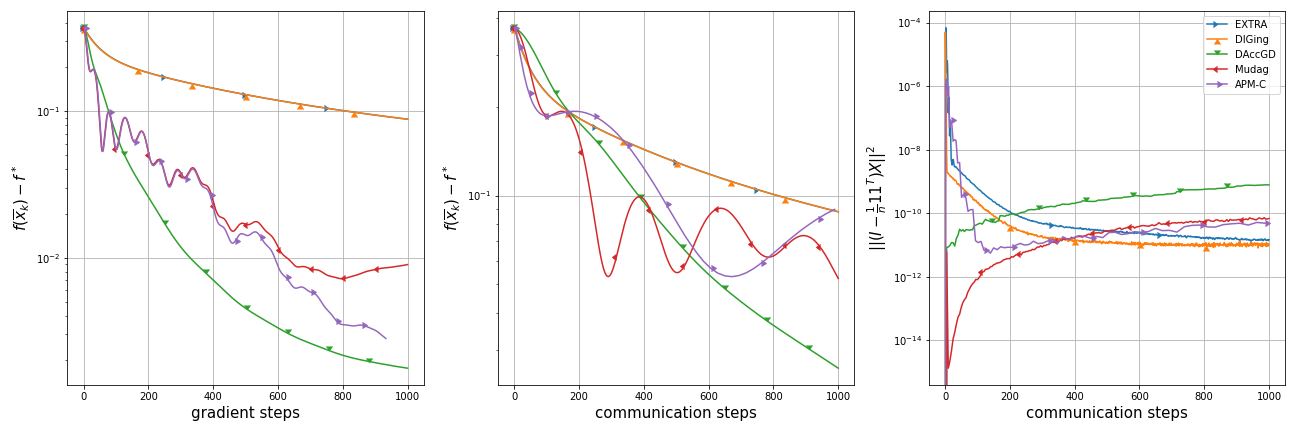}
	\caption{a9a (logistic regression), $100$ nodes}
\end{figure}

\begin{figure}[H]
	\centering
	\includegraphics[width = \textwidth]{./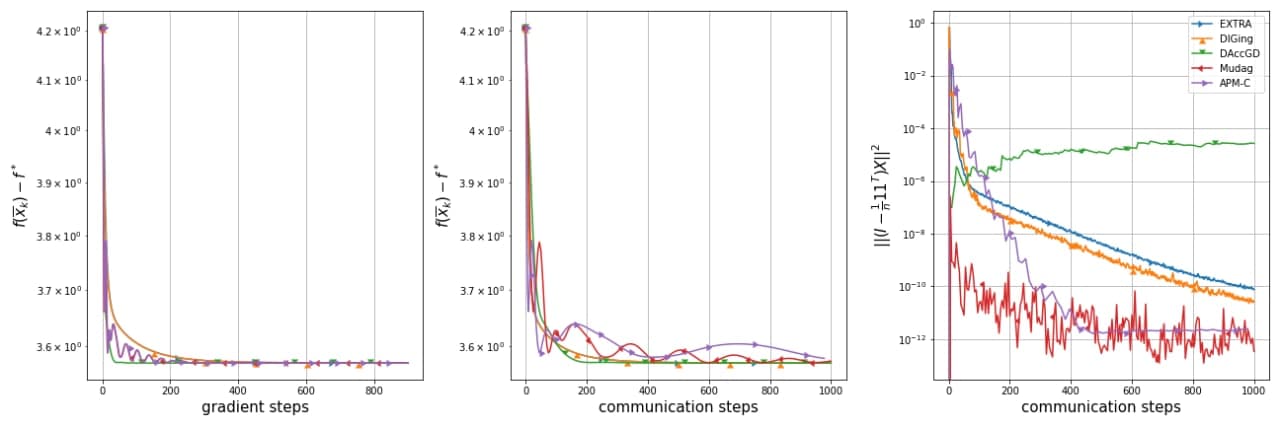}
	\caption{cadata (least squares), $20$ nodes}
\end{figure}

\section{Conclusion}

This paper studies an inexact oracle-based approach to decentralized optimization. The paper focuses on a specific case of strongly convex smooth functions, but the inexact oracle framework introduced in \cite{devolder2014first} is also applicable to non-strongly convex functions. The development of this framework in \cite{stonyakin2020inexact} also enables to generalize the results of this article to composite optimization problems and distributed algorithms for saddle-point problems and variational inequalities.

Another interesting application of inexact oracle approach lies in stochastic decentralized algorithms. Consider a class of $L$-smooth $\mu$-strongly convex objectives with gradient noise of each $f_i$ being upper-bounded by $\sigma^2$. For this class of problems, lower complexity bounds write as \cite{arjevani2015communication}
\begin{align*}
    O(\sqrt{L/\mu\chi}\log(1/\eps)) &\qquad\text{stochastic oracle calls per node} \\
    O\cbraces{\max\cbraces{\frac{\sigma^2}{m\mu\eps}, \sqrt{L/\mu}}\log(1/\eps)} &\qquad\text{communication steps}.
\end{align*}
At the moment, there exist methods which are optimal either in the number of oracle calls or in the number of communication steps. We believe that approach of this paper combined with a specific batch-size choice described in \cite{dvinskikh2020accelerated} allows to develop a decentralized algorithm reaching both optimal complexities.

\section{Acknowledgements}

The authors are grateful to Dmitriy Metelev and Fedor Stonyakin for their thoughtful proofreading of the paper.

\bibliographystyle{unsrt}
\bibliography{references}

\newpage
\appendix

\section{Proof of Theorem \ref{th:total_iterations_strongly_convex}}\label{app:total_iterations_strongly_convex}

\subsection{Gradient step}
Consider update rule
\begin{align}\label{eq:generalized_gradient_step}
Y = \argmin_{X\in\R^{n\times d}}\braces{\angles{P, X} + \frac{1}{2}\norm{X}^2}, \text{ where } P = \begin{pmatrix} p_1^\top \\ \vdots \\ p_n^\top \end{pmatrix}\in\R^{n\times d}.
\end{align}
By first-order optimality conditions $Y = -P$. Therefore $\ol y = -\ol p$ and
\begin{align}\label{eq:generalized_gradient_step_projection}
\ol y = \argmin_{x\in\R^d}\braces{\angles{\ol p, x} + \frac{1}{2}\norm{x}^2}.
\end{align}
This means that update rule \eqref{eq:generalized_gradient_step} in $\R^{n\times d}$ comes down to update rule \eqref{eq:generalized_gradient_step_projection} in $\R^{d}$.
In the case of gradient step, $P = \frac{1}{L}\nabla F(X^k) - X^k$, and step
\begin{align*}
X^{k+1} = X^k - \frac{1}{L}\nabla F(X^k)
\end{align*}
is comes down to $x^{k+1} = x^k - \frac{1}{L} \cbraces{\frac{1}{n}\sum_{i=1}^n \nabla f_i(x_i)}$.

Since update rule \ref{alg_step:agd_step} in Algorithm \ref{alg:decentralized_agd} has form \eqref{eq:generalized_gradient_step}, Algorithm \ref{alg:decentralized_agd} comes down to an iterative procedure in $\R^{d}$.
\begin{align*}
\ol y^{k+1} &= \frac{\alpha^{k+1}\ol u^k + A^k\ol x^k}{A^{k+1}} \\
\ol u^{k + 1} &= \argmin_{\ol z\in\R^d}\braces{\alpha^{k+1}\cbraces{\angles{\frac{1}{n}\sum_{i=1}^n \nabla f(y_i^{k+1}), \ol z - \ol y^{k+1}} + \frac{\mu}{2}\norm{\ol z - \ol y^{k+1}}^2} + \frac{1+ A^k\mu}{2}\norm{\ol z - \ol u^k}^2} \\
\ol x^{k+1} &= \frac{\alpha^{k+1}\ol u^{k+1} + A^k\ol x^k}{A^{k+1}}
\end{align*}

\subsection{Outer loop}

Initially we recall basic properties of coefficients $A^k$.
\begin{lemma}\label{lem:Ak_properties}
	For coefficients $A^k$, it holds
	\begin{align*}
	&1.~ A^N \ge \frac{1}{L}\cbraces{1 + \frac{1}{2}\sqrt\frac{\mu}{L}}^{2(N-1)} \\
	&2.~ \frac{\sum_{i=1}^k A^i}{A^k} \le {1 + \sqrt\frac{L}{\mu}}.
	\end{align*}
\end{lemma}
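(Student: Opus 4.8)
The plan is to reduce the whole lemma to the single clean identity hidden in Step~2 of Algorithm~\ref{alg:decentralized_agd}. Substituting $\alpha^{k+1} = A^{k+1} - A^k$ into $(A^k + \alpha^{k+1})(1 + A^k\mu) = L(\alpha^{k+1})^2$ gives
\[
A^{k+1}(1 + A^k\mu) = L(\alpha^{k+1})^2, \qquad\text{i.e.}\qquad \alpha^{k+1} = \sqrt{A^{k+1}(1 + A^k\mu)/L}.
\]
The base case is then immediate: with $A^0 = 0$ the defining equation collapses to $\alpha^1 = L(\alpha^1)^2$, so $A^1 = \alpha^1 = 1/L$, which is exactly the prefactor appearing in part~1.

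For part~1 I would pass to square roots. Writing $\sqrt{A^{k+1}} - \sqrt{A^k} = (A^{k+1} - A^k)/(\sqrt{A^{k+1}} + \sqrt{A^k})$ and inserting the identity above for the numerator $\alpha^{k+1} = \sqrt{A^{k+1}}\,\sqrt{(1 + A^k\mu)/L}$, then bounding the denominator crudely by $\sqrt{A^{k+1}} + \sqrt{A^k} \le 2\sqrt{A^{k+1}}$, yields
\[
\sqrt{A^{k+1}} - \sqrt{A^k} \ge \tfrac12\sqrt{(1 + A^k\mu)/L} \ge \tfrac12\sqrt{\mu/L}\,\sqrt{A^k}.
\]
This rearranges to the one-step estimate $\sqrt{A^{k+1}} \ge \bigl(1 + \tfrac12\sqrt{\mu/L}\bigr)\sqrt{A^k}$; iterating from $A^1 = 1/L$ and squaring gives $A^N \ge \tfrac1L\bigl(1 + \tfrac12\sqrt{\mu/L}\bigr)^{2(N-1)}$, which is part~1.

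For part~2 the same one-step inequality supplies the geometric ratio $q := \bigl(1 + \tfrac12\sqrt{\mu/L}\bigr)^2 \le A^i/A^{i-1}$, whence $A^{k-j} \le A^k q^{-j}$ and, summing the geometric series, $\sum_{i=1}^k A^i \le A^k\cdot q/(q-1)$. It then suffices to check $q/(q-1) \le 1 + \sqrt{L/\mu}$. Setting $t = \sqrt{\mu/L}\in(0,1]$, this is equivalent after clearing denominators to $q \ge 1 + t$, which holds trivially since $q = 1 + t + t^2/4 \ge 1 + t$. This completes part~2.

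The only real subtlety I anticipate is matching the exact constant $1 + \sqrt{L/\mu}$ in part~2: the deliberately loose factor-of-two bound on the denominator in part~1 feeds directly into $q$, and one must confirm that the resulting geometric sum still lands under the target. The reduction to $q \ge 1 + t$ shows it does, with room to spare, so no sharper estimate of $\alpha^{k+1}$ is needed; everything else is routine algebra on the quadratic relation.
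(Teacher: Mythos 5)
Your proof is correct, and it takes a genuinely different route from the paper's, which in fact proves neither claim from scratch: for part 1 the paper simply cites Lemma 3.7 of \cite{stonyakin2020inexact}, and for part 2 it observes that $A^{k+1}$ obeys the same recursion as the rescaled Nesterov--Devolder sequence $B^k/L$ (where $L + \mu B^k = L(\beta^{k+1})^2/B^{k+1}$, $\beta^0 = 1$), matches initial conditions via $A^1 = 1/L = B^0/L$, and invokes the bound $\sum_{i=0}^{k} B^i / B^k \le 1 + \sqrt{L/\mu}$ proved in \cite{devolder2013first}. You instead make the lemma self-contained, extracting everything from one contraction step: from $A^{k+1}(1 + A^k\mu) = L(\alpha^{k+1})^2$ you get $\sqrt{A^{k+1}} - \sqrt{A^k} \ge \tfrac{1}{2}\sqrt{(1+A^k\mu)/L} \ge \tfrac{1}{2}\sqrt{\mu/L}\,\sqrt{A^k}$, which iterated from the base case $A^1 = 1/L$ (correctly identified as the greater root when $A^0=0$) yields part 1, and which, reread as the geometric ratio $q = \bigl(1 + \tfrac{1}{2}\sqrt{\mu/L}\bigr)^2 \le A^{i}/A^{i-1}$, yields part 2 via $\sum_{i=1}^k A^i \le A^k\, q/(q-1)$ together with the elementary check that $q/(q-1) \le 1 + \sqrt{L/\mu}$ reduces to $q \ge 1 + \sqrt{\mu/L}$, which holds with slack $\mu/(4L)$. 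All the algebra checks out (the cross-multiplication step, the denominator bound $\sqrt{A^{k+1}}+\sqrt{A^k}\le 2\sqrt{A^{k+1}}$, and the geometric summation). What each approach buys: the paper's reduction is shorter and leans on established results; yours is verifiable without chasing references, shows that both statements flow from a single geometric-growth mechanism, and in part 2 actually establishes the marginally sharper constant $q/(q-1)$ before relaxing it to the stated $1 + \sqrt{L/\mu}$.
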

\begin{proof}
	For first statement see \cite{stonyakin2020inexact}, Lemma 3.7.
	
	Recall update rule for $A^k$:
	\begin{align}\label{eq:coef_sequence}
	1 + \mu A^k = \frac{L(\alpha^{k+1})^2}{A^{k+1}},~ A^{k} = \sum_{i=0}^k \alpha^{i},~ \alpha^0 = 0.
	\end{align}
	In \cite{devolder2013first} an almost similar sequence is studied:
	\begin{align}\label{eq:nesterov_coef_sequence}
	L + \mu B^k = \frac{L (\beta^{k+1})^2}{B^{k+1}},~ B^k = \sum_{i=0}^k \beta^i,~ \beta^0 = 1,
	\end{align}
	and for sequence $\braces{B^k}_{k=0}^\infty$ it is shown $\frac{\sum_{i=0}^k B^i}{B^k}\le 1 + \sqrt{L / \mu}$.
	Dividing \eqref{eq:nesterov_coef_sequence} by L yields
	\begin{align*}
	1 + \mu(B^k / L) = \frac{L (\beta^{k+1} / L)^2}{(B^{k+1} / L)},
	\end{align*}
	which means that update rule for $B^k / L$ is equivalent to \eqref{eq:coef_sequence}. Since $A^1 = 1 / L = B^0 / L$, it holds $A^{k+1} = B^k / L,~ k\ge 0$ and
	\begin{align*}
	\frac{\sum_{i=1}^k A^i}{A^k} = \frac{\sum_{i=0}^{k-1} B^i / L}{B^{k-1} / L} \le 1 + \sqrt\frac{L}{\mu}.
	\end{align*}
\end{proof}

\begin{lemma}\label{lem:inexact_agd_convergence}
	Provided that consensus accuracy is $\delta'$, i.e. $\norm{\mU^{j} - \ol\mU^j}^2\le \delta' \text{ for } j = 1, \ldots, k$, we have
	\begin{align*}
	f(\ol x^k) - f(x^*) &\le \frac{\norm{\ol u^0 - x^*}^2}{2A^k} + \frac{2\sum_{j=1}^{k} A^j\delta}{A^k} \\
	\norm{\ol u^k - x^*}^2 &\le \frac{\norm{\ol u^0 - x^*}^2}{1 + A^k\mu} + \frac{4\sum_{j=1}^{k} A^j\delta}{1 + A^k\mu}
	\end{align*}
	where $\delta$ is given in \eqref{eq:delta_inexact_oracle}.
\end{lemma}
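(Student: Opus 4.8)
The plan is to run the standard accelerated potential-function argument for the $(\delta,L,\mu)$-model, where $L=2\Lav$ and $\mu=\muav/2$ are furnished by Lemma \ref{lem:inexact_oracle}. Writing $g^{j}=g_{\delta,L,\mu}(\ol y^{j},\mY^{j})$ for brevity, I would first record why the hypothesis is the right one: since $\mX^0\in\cset$ and each $\mX^{j},\mY^{j}$ produced by Algorithm \ref{alg:decentralized_agd} is an affine combination (coefficients summing to one) of the iterates $\mU^i$ and of $\mX^0$, convexity of $\norm{\cdot}^2$ propagates $\norm{\mU^j - \ol{\mU}^j}^2\le\delta'$ to $\norm{\mY^{j}-\ol{\mY}^{j}}^2\le\delta'$. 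Hence Lemma \ref{lem:inexact_oracle} applies at every query point, so $(f_{\delta,L,\mu}(\ol y^{j},\mY^{j}),g^{j})$ is a genuine $(\delta,L,\mu)$-model of $f$ at $\ol y^{j}$. Working in the reduced $\R^d$ dynamics established in the previous subsection, I would then introduce the Lyapunov function
\[
\Psi_k = A^k\cbraces{f(\ol x^k)-f(x^*)} + \frac{1+A^k\mu}{2}\norm{\ol u^k - x^*}^2 .
\]

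The heart of the argument is the one-step estimate $\Psi_{k+1}\le \Psi_k + 2A^{k+1}\delta$. To derive it I would combine four facts. First, $\ol u^{k+1}$ is the exact minimizer of the $(1+A^{k+1}\mu)$-strongly convex function $\phi(\ol z)=\alpha^{k+1}\big(\angles{g^{k+1},\ol z-\ol y^{k+1}}+\tfrac{\mu}{2}\norm{\ol z-\ol y^{k+1}}^2\big)+\tfrac{1+A^k\mu}{2}\norm{\ol z-\ol u^k}^2$, so its three-point inequality at $x^*$ bounds $\tfrac{1+A^{k+1}\mu}{2}\norm{\ol u^{k+1}-x^*}^2$ by $\phi(x^*)-\phi(\ol u^{k+1})$. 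Second, the lower (strong-convexity) side of the model at $\ol y^{k+1}$ replaces the bracket in $\phi(x^*)$ by $f(x^*)-f_{\delta,L,\mu}(\ol y^{k+1},\mY^{k+1})$. Third, the upper (smoothness) side of the model, together with $\ol x^{k+1}-\ol y^{k+1}=\tfrac{\alpha^{k+1}}{A^{k+1}}(\ol u^{k+1}-\ol u^k)$ and the coefficient identity $L(\alpha^{k+1})^2=A^{k+1}(1+A^k\mu)$ from Step 2, upper bounds $A^{k+1}f(\ol x^{k+1})$. Fourth, the lower side of the model at $\ol x^k$ controls $-A^k f(\ol x^k)$. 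Adding these, the model-value terms $f_{\delta,L,\mu}(\ol y^{k+1},\mY^{k+1})$ cancel because $A^{k+1}=A^k+\alpha^{k+1}$, and the terms linear in $g^{k+1}$ cancel because $A^{k+1}\ol y^{k+1}=\alpha^{k+1}\ol u^k+A^k\ol x^k$; what survives is $-\tfrac{\alpha^{k+1}\mu}{2}\norm{\ol u^{k+1}-\ol y^{k+1}}^2\le0$ plus the accumulated inexactness, yielding the stated bound.

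Finally I would telescope. Since $A^0=0$ gives $\Psi_0=\tfrac12\norm{\ol u^0-x^*}^2$, summing the one-step estimate produces
\[
\Psi_k \le \tfrac12\norm{\ol u^0-x^*}^2 + 2\sum_{j=1}^{k}A^j\delta .
\]
Keeping only the function-gap term on the left and dividing by $A^k$ gives the first inequality; keeping only the distance term and dividing by $1+A^k\mu$ gives the second, with the factor $4$ arising from the $2$ in the one-step bound.

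The main obstacle is the exact algebraic bookkeeping in the middle step: the two cancellations — of the model-value terms via $A^{k+1}=A^k+\alpha^{k+1}$, and of the $g^{k+1}$-linear terms via the definition of $\ol y^{k+1}$ — are precisely what turn convergence into \emph{accelerated} convergence, and pinning down the constant in front of $\delta$ (here $2$, following the conservative inexact-oracle accounting of \cite{stonyakin2020inexact}) demands care. A secondary point, easy to overlook, is the propagation of the consensus bound from the $\mU^j$ to the query points $\mY^{j}$, which rests on the initialization $\mX^0\in\cset$.
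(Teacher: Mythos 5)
Your proof is correct and takes essentially the same approach as the paper: the paper-specific ingredient --- propagating the bound $\norm{\mU^j - \ol\mU^j}^2 \le \delta'$ to $\mX^j$ and to the query points $\mY^j$ via the convex combinations with weights $\alpha^{j+1}/A^{j+1}$ and $A^j/A^{j+1}$, so that Lemma \ref{lem:inexact_oracle} furnishes a valid $(\delta, 2\Lav, \muav/2)$-model at every $\ol y^j$ --- is exactly the induction the paper performs. The only difference is that the paper then concludes by citing Theorem 3.4 of \cite{stonyakin2020inexact}, whereas you inline that theorem's potential-function proof; this is the same mathematics made self-contained, and your (conservative) one-step estimate $\Psi_{k+1} \le \Psi_k + 2A^{k+1}\delta$ indeed reproduces the stated constants $2$ and $4$ after telescoping and dividing by $A^k$ and $(1+A^k\mu)/2$ respectively.
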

\begin{proof}
	First, assuming that $\norm{\mU^{j} - \ol\mU^j}^2\le \delta'$, we show that $\mY^j, \mU^j, \mX^j$ lie in $\sqrt{\delta'}$-neighborhood of $\cset$ by induction. At $j=0$, we have $\norm{\mX^0 - \ol\mX^0} = \norm{\mU^0 - \ol\mU^0} = 0$. Using $A^{j+1} = A^j + \alpha^j$, we get an induction pass $j\to j+1$.
	\begin{align*}
	\norm{\mY^{j+1} - \ol\mY^{j+1}} &\le \frac{\alpha^{j+1}}{A^{j+1}}\norm{\mU^j - \ol\mU^j} + \frac{A^j}{A^{j+1}} \norm{\mX^j - \ol\mX^j}\le \sqrt{\delta'} \\
	\norm{\mX^{j+1} - \ol\mX^{j+1}} &\le \frac{\alpha^{j+1}}{A^{j+1}}\norm{\mU^{j+1} - \ol\mU^{j+1}} + \frac{A^j}{A^{j+1}} \norm{\mX^j - \ol\mX^j}\le \sqrt{\delta'}
	\end{align*}
	Therefore, $g(\ol y) = \frac{1}{n}\sum_{i=1}^n \nabla f(y_i)$ is a gradient from $(\delta, L, \mu)$-model of $f$, and the desired result directly follows from Theorem 3.4 in \cite{stonyakin2020inexact}.
\end{proof}

\subsection{Consensus subroutine iterations}
How many consensus iterations we need to reach accuracy $\delta'$? We formulate the corresponding result in the following
\begin{lemma}\label{lem:consensus_iters_strongly_convex}
	Let consensus accuracy be maintained at level $\delta'$, i.e. $\norm{\mU^j - \ol\mU^j}^2\le \delta' \text{ for } j = 1, \ldots, k$ and let Assumption \ref{assum:mixing_matrix} hold. Define
	\begin{align*}
	\sqrt D := \cbraces{\frac{2\Lmax}{\sqrt{L\mu}} + 1}\sqrt{\delta'} + \frac{\Lmax}{\mu} \sqrt{n} \cbraces{\norm{\ol u^0 - x^*}^2 + \frac{8{\delta'}}{\sqrt{L\mu}}}^{1/2} + \frac{2\norm{\nabla F(\mX^*)}}{\sqrt{L\mu}}
	\end{align*}
	
	Then it is sufficient to make $T_k = T = \frac{\tau}{2\lambda}\log\frac{D}{\delta'}$ consensus iterations in order to ensure $\delta'$-accuracy on step $k+1$, i.e. $\norm{\mU^{k+1} - \ol\mU^{k+1}}^2\le \delta'$.
\end{lemma}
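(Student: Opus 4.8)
The plan is to reduce the statement to one geometric-contraction estimate for the consensus subroutine applied to $\mV^{k+1}$. Since each mixing matrix $\mW^t$ is doubly stochastic, it preserves the column average, so $\ol\mU^{k+1} = \ol\mV^{k+1}$ and consensus only shrinks the deviation from $\cset$. Grouping the $T$ multiplications $\mX^{t+1} = \mW^t\mX^t$ into blocks of length $\tau$, applying the contraction property of Assumption \ref{assum:mixing_matrix} to each full block and using double stochasticity (non-expansiveness on the orthogonal complement of $\cset$) for the at most $\tau - 1$ leftover factors, I would obtain
\begin{align*}
\norm{\mU^{k+1} - \ol\mU^{k+1}} \le (1 - \lambda)^{\floor{T/\tau}}\norm{\mV^{k+1} - \ol\mV^{k+1}}.
\end{align*}
It then suffices to prove $\norm{\mV^{k+1} - \ol\mV^{k+1}}^2 \le D$ and to choose $T$ so that the contraction drives $\sqrt D$ below $\sqrt{\delta'}$.

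Bounding the consensus input $\norm{\mV^{k+1} - \ol\mV^{k+1}}^2 \le D$ is the main work. Because $\Pi_\cset$ is linear, I would apply $\mI - \Pi_\cset$ to the formula for $\mV^{k+1}$ in step \ref{alg_step:agd_step} and split into three pieces,
\begin{align*}
\norm{\mV^{k+1} - \ol\mV^{k+1}} \le \frac{\mu\norm{\mY^{k+1} - \ol\mY^{k+1}} + (1 + A^k\mu)\norm{\mU^k - \ol\mU^k}}{1 + A^k\mu + \mu} + \frac{\alpha^{k+1}}{1 + A^k\mu + \mu}\norm{\nabla F(\mY^{k+1}) - \ol{\nabla F}(\mY^{k+1})}.
\end{align*}
The first fraction is at most $\sqrt{\delta'}$, since $\norm{\mY^{k+1} - \ol\mY^{k+1}} \le \sqrt{\delta'}$ (the induction already carried out inside Lemma \ref{lem:inexact_agd_convergence}) and $\norm{\mU^k - \ol\mU^k} \le \sqrt{\delta'}$ (hypothesis), while the two weights sum to the denominator. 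This contributes the standalone $\sqrt{\delta'}$, i.e. the ``$+1$'' in the first coefficient of $\sqrt D$.

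For the gradient term I would exploit $\Lmax$-smoothness of $F$ together with $\ol{\nabla F}(\mX^*) = 0$ (optimality of $x^*$), so that $\mI - \Pi_\cset$ being norm-one gives
\begin{align*}
\norm{\nabla F(\mY^{k+1}) - \ol{\nabla F}(\mY^{k+1})} \le \Lmax\norm{\mY^{k+1} - \mX^*} + \norm{\nabla F(\mX^*)} \le \Lmax\sqrt{\delta'} + \Lmax\sqrt n\,\norm{\ol y^{k+1} - x^*} + \norm{\nabla F(\mX^*)},
\end{align*}
where I also used $\norm{\mY^{k+1} - \mX^*} \le \sqrt{\delta'} + \sqrt n\,\norm{\ol y^{k+1} - x^*}$. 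The decisive point is to control, uniformly in $k$, the ratio $\alpha^{k+1}/(1 + A^k\mu + \mu)$ and its product with $\norm{\ol y^{k+1} - x^*}$. From the recursion $A^{k+1}(1 + A^k\mu) = L(\alpha^{k+1})^2$ I would show this ratio is of order $1/\sqrt{L\mu}$, which turns the $\Lmax\sqrt{\delta'}$ and $\norm{\nabla F(\mX^*)}$ parts into the terms $\tfrac{2\Lmax}{\sqrt{L\mu}}\sqrt{\delta'}$ and $\tfrac{2\norm{\nabla F(\mX^*)}}{\sqrt{L\mu}}$ of $\sqrt D$. Feeding in the convergence bound $\norm{\ol u^k - x^*}^2 \le \frac{\norm{\ol u^0 - x^*}^2 + 4\sum_{j=1}^k A^j\delta}{1 + A^k\mu}$ from Lemma \ref{lem:inexact_agd_convergence}, estimating $\sum_{j=1}^k A^j$ by part 2 of Lemma \ref{lem:Ak_properties} and substituting $\delta$ from \eqref{eq:delta_inexact_oracle}, the product $\frac{\alpha^{k+1}}{1+A^k\mu}\,\norm{\ol y^{k+1}-x^*}$ yields the remaining term $\frac{\Lmax}{\mu}\sqrt n\,\cbraces{\norm{\ol u^0 - x^*}^2 + \frac{8\delta'}{\sqrt{L\mu}}}^{1/2}$. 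Summing the three contributions gives $\norm{\mV^{k+1} - \ol\mV^{k+1}}^2 \le D$.

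Finally, imposing $(1 - \lambda)^{\floor{T/\tau}}\sqrt D \le \sqrt{\delta'}$ and using $\log\frac{1}{1-\lambda} \ge \lambda$, I would solve for the smallest admissible $T$ and arrive at $T = \frac{\tau}{2\lambda}\log\frac{D}{\delta'}$, as claimed. I expect the main obstacle to be exactly the uniform-in-$k$ estimates of $\frac{\alpha^{k+1}}{1+A^k\mu}$ and of the product $\frac{\alpha^{k+1}}{1+A^k\mu}\norm{\ol y^{k+1}-x^*}$: these require matching the geometric growth of the Nesterov coefficients against the decay of $\norm{\ol u^k - x^*}$, and pinning down the clean constants of $\sqrt D$ amounts to a careful (but elementary) maximization of ratios such as $\sqrt{A^{k+1}}/(1 + A^k\mu)$ over $k$.
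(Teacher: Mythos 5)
Your proposal is correct and follows essentially the same route as the paper's proof: average preservation plus the contraction property reduce the claim to showing $\norm{\mV^{k+1} - \ol\mV^{k+1}}^2 \le D$, which you establish via the same three-term decomposition of step \ref{alg_step:agd_step}, the same $\Lmax$-smoothness bound anchored at $\mX^*$, the bound $\alpha^{k+1}/(1+A^k\mu) \le 2/\sqrt{L\mu}$ from the coefficient recursion, and Lemmas \ref{lem:Ak_properties} and \ref{lem:inexact_agd_convergence} to control $\norm{\ol y^{k+1} - x^*}$. Your minor refinements — applying $\mI - \Pi_\cset$ directly to the gradient term (using $\ol{\nabla F}(\mX^*) = 0$) and handling non-integer $T/\tau$ via a floor with non-expansiveness of the leftover factors — only tighten steps the paper glosses over.
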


\begin{proof}
	First, note that multiplication by mixing matrix preserves average, i.e. $\frac{1}{n}\onevec\onevec^\top\mX = \frac{1}{n}\onevec\onevec^\top \mW^k \mX~ \forall k\ge 0$. In particular, this implies $\ol\mU^{k+1} = \ol\mV^{k+1}$.
	
	By contraction property of mixing matrix sequence $\{\mW^k\}_{k=0}^\infty$ it holds
	\begin{align*}
	&\norm{\mU^{k+1} - \aU^{k+1}}^2 \le (1 - \lambda)^{2(T/\tau)} \norm{\mV^{k+1} - \aU^{k+1}}^2 \le e^{-2(T/\tau)\lambda} \norm{\mV^{k+1} - \aU^{k+1}}^2 \\
	\end{align*}
	Assuming that $\norm{\mV^{k+1} - \aU^{k+1}}^2\le D$, we only need $T = \frac{\tau}{2\lambda}\log\frac{D}{\delta'}$ iterations to ensure $\norm{\mU^{k+1} - \aU^{k+1}}^2\le\delta'$. In the rest of the proof, we show that $\norm{\mV^{k+1} - \aU^{k+1}} = \norm{\mV^{k+1} - \aV^{k+1}} \le \sqrt D$.
	
	From optimality conditions for update rule \eqref{alg_step:agd_step} of Algorithm \ref{alg:decentralized_agd} it holds
	\begin{align*}
	\mV^{k+1} = \frac{\alpha^{k+1}\mu\mY^{k+1} + (1 + A^k\mu)\mU^k}{1 + A^{k+1}\mu} - \frac{\alpha^{k+1}}{1 + A^{k+1}\mu} \nabla F(\mY^{k+1})
	\end{align*}
	and thus
	\begin{align*}
	\norm{\mV^{k+1} - \ol\mV^{k+1}}
	&\le \frac{\alpha^{k+1}\mu \norm{\mY^{k+1} - \ol\mY^{k+1}}}{1 + A^{k+1}\mu} + \frac{(1 + A^k\mu)\norm{\mU^k - \ol\mU^k}}{1 + A^{k+1}\mu} + \frac{\alpha^{k+1}}{1 + A^{k+1}\mu} \norm{\nabla F(\mY^{k+1})} \\
	&\le \sqrt{\delta'} + \frac{\alpha^{k+1}}{1 + A^{k+1}\mu} \norm{\nabla F(\mY^{k+1})}.
	\end{align*}
	We estimate $\norm{\nabla F(\mY^{k+1})}$ using $\Lmax$-smoothness of $F$: 
	\begin{align*}
	\norm{\nabla F(\mY^{k+1})}
	&\le \norm{\nabla F(\mY^{k+1}) - \nabla F(\mX^*)} + \norm{\nabla F(\mX^*)} \\
	&\le \Lmax\underbrace{\norm{\mY^{k+1} - \ol\mY^{k+1}}}_{\le\sqrt{\delta'}} + \Lmax\underbrace{\norm{\ol\mY^{k+1} - \mX^*}}_{= \sqrt n \norm{\ol y^{k+1} - x^*}} + \norm{\nabla F(\mX^*)} \numberthis\label{eq:gradeint_y_upper_bound}
	\end{align*}
	where
	\begin{align*}
	\mX^* = \begin{pmatrix} (x^*)^\top \\ \vdots \\ (x^*)^\top \end{pmatrix},~
	x^* = \argmin_{x\in\R^d} f(x).
	\end{align*}
	It remains to estimate $\norm{\ol y^{k+1} - x^*}$.
	\begin{align*}
	\norm{\ol y^{k+1} - x^*}
	&\le \frac{\alpha^{k+1}}{A^{k+1}} \norm{\ol x^{k+1} - x^*} + \frac{A^{k}}{A^{k+1}}\norm{\ol u^{k+1} - x^*}
	\le \max\braces{\norm{\ol x^{k+1} - x^*}, \norm{\ol u^{k+1} - x^*}}
	\end{align*}
	
	By Lemma \ref{lem:inexact_agd_convergence} and strong convexity of $f$:
	\begin{align*}
	\norm{\ol x^{k+1} - x^*}^2 \le \frac{2}{\mu}\cbraces{f(\ol x^{k+1}) - f(x^*)} \le \frac{\norm{\ol u^0 - x^*}^2}{A^{k+1}\mu} + \frac{4\sum_{i=1}^{k+1} A^i \delta'}{A^{k+1}\mu}
	\end{align*}
	and therefore
	\begin{align*}
	\norm{\ol y^{k+1} - x^*}^2 
	&\le \max\braces{\frac{\norm{\ol u^0 - x^*}^2}{A^{k+1}\mu} + \frac{4\sum_{i=1}^{k+1} A^i \delta'}{A^{k+1}\mu},~ \frac{\norm{\ol u^0 - x^*}^2}{1 + A^{k+1}\mu} + \frac{4\sum_{i=1}^{k+1} A^i\delta'}{1 + A^{k+1}\mu}} \\
	&\overset{\circledOne}{\le} \frac{\norm{\ol u^0 - x^*}^2}{A^{k+1}\mu} + \frac{4}{\mu} \cbraces{1 + \sqrt\frac{L}{\mu}} \delta',
	\end{align*}
	where the last inequality holds by Lemma \ref{lem:Ak_properties}.
	
	Returning to \eqref{eq:gradeint_y_upper_bound}, we get
	\begin{align*}
	\norm{\nabla F(\mY^{k+1})}
	&\le \Lmax\sqrt{\delta'} + \Lmax\sqrt{n} \cbraces{\frac{\norm{\ol u^0 - x^*}^2}{A^{k+1}\mu} + \frac{4}{\mu} \cbraces{1 + \sqrt\frac{L}{\mu}} \delta'}^{1/2} + \norm{\nabla F(\mX^*)} \\
	&\le \Lmax\sqrt{\delta'} + \Lmax\sqrt{n} \cbraces{\frac{L}{\mu} \norm{\ol u^0 - x^*}^2 \cbraces{1 + \frac{1}{2}\sqrt\frac{\mu}{L}}^{-2k} + \frac{8L^{1/2}}{\mu^{3/2}}\delta'}^{1/2} + \norm{\nabla F(\mX^*)} \\
	&\le \Lmax\sqrt{\delta'} + \Lmax\sqrt{n} \cbraces{\frac{L}{\mu} \norm{\ol u^0 - x^*}^2 + \frac{8L^{1/2}}{\mu^{3/2}}{\delta'}}^{1/2} + \norm{\nabla F(\mX^*)}.
	\end{align*}
	For distance to consensus of $\mV^{k+1}$, it holds
	\begin{align*}
	\norm{\mV^{k+1} - \aV^{k+1}} \le \sqrt{\delta'} + \frac{\alpha^{k+1}}{1 + A^{k+1}\mu} \norm{\nabla F\cbraces{\mY^{k+1}}}
	\end{align*}
	We estimate coefficient by $\norm{\nabla F\cbraces{\mY^{k+1}}}$ using the definition of $\alpha^{k+1}$.
	\begin{align*}
	&1 + A^k\mu = \frac{L(\alpha^{k+1})^2}{A^k + \alpha^{k+1}} \\
	&L(\alpha^{k+1})^2 - (1 + A^k\mu)\alpha^{k+1} - (1 + A^k\mu)A^k = 0 \\
	&\alpha^{k+1} = \frac{1+ A^k\mu + \sqrt{(1 + A^k\mu)^2 + 4LA^k(1 + A^k\mu)}}{2L} \\
	&\frac{\alpha^{k+1}}{1 + A^{k+1}\mu} \le \frac{\alpha^{k+1}}{1 + A^k\mu} = \frac{1}{2L} \cbraces{1 + \sqrt{1 + 4\frac{LA^k}{1 + A^k\mu}}} \\
	&\qquad\le \frac{1}{2L}\cbraces{\sqrt{\frac{L}{\mu}} + \sqrt{\frac{L}{\mu} + 4\frac{L}{\mu}}} \le \frac{2}{\sqrt{L\mu}}
	\end{align*}
	Returning to $\mV^{k+1}$, we get
	\begin{align*}
	\norm{\mV^{k+1} - \aV^{k+1}}
	&\le \cbraces{\frac{2\Lmax}{\sqrt{L\mu}} + 1}\sqrt{\delta'} + \Lmax\sqrt{\frac{n}{L\mu}} \cbraces{\frac{L}{\mu} \norm{\ol u^0 - x^*}^2 + \frac{8L^{1/2}}{\mu^{3/2}}{\delta'}}^{1/2} + \frac{2\norm{\nabla F(\mX^*)}}{\sqrt{L\mu}} \\
	&= \cbraces{\frac{2\Lmax}{\sqrt{L\mu}} + 1}\sqrt{\delta'} + \frac{\Lmax}{\mu} \sqrt{n} \cbraces{\norm{\ol u^0 - x^*}^2 + \frac{8{\delta'}}{\sqrt{L\mu}}}^{1/2} + \frac{2\norm{\nabla F(\mX^*)}}{\sqrt{L\mu}} = \sqrt D
	\end{align*}
	which concludes the proof.
\end{proof}

\subsection{Putting the proof together}
Let us show that choice of number of subroutine iterations $T_k = T$ yields
\begin{align*}
&f(\ol x^k) - f(x^*)\le \frac{\norm{\ol u^0 - x^*}^2}{2A^k} + \frac{2\sum_{j=1}^k A^j\delta}{A^k}
\end{align*}
by induction. At $k=0$, we have $\norm{\mU^0 - \aU^0} = 0$ and by Lemma \ref{lem:inexact_agd_convergence} it holds
\begin{align*}
f(\ol x^1) - f(x^*) \le \frac{\norm{\ol u^0 - x^*}^2}{2A^1} + \frac{2A^1\delta}{A^1}.
\end{align*}
For induction pass, assume that $\norm{\mU^j - \aU^j}^2\le \delta'$ for $j = 0,\ldots, k$. By Lemma \ref{lem:consensus_iters_strongly_convex}, if we set $T_k = T$, then $\norm{\mU^{k+1} - \aU^{k+1}}^2\le \delta'$. Applying Lemma \ref{lem:inexact_agd_convergence} again, we get
\begin{align*}
&f(\ol x^{k+1}) - f(x^*)\le \frac{\norm{\ol u^0 - x^*}^2}{2A^{k+1}} + \frac{2\sum_{j=1}^{k+1} A^j\delta}{A^{k+1}}
\end{align*}	

Recalling a bound on $A^k$ from Lemma \ref{lem:Ak_properties} gives
\begin{align*}
f(\ol x^N) - f(x^*)
&\le \frac{L\norm{\ol u^0 - x^*}^2}{2}\cbraces{1 + \frac{1}{2}\sqrt\frac{\mu}{L}}^{-2(N-1)} + 2\cbraces{1 + \sqrt\frac{L}{\mu}}\delta \\
&\overset{\circledOne}{=} \Lav\norm{\ol u^0 - x^*}^2\cbraces{1 + \frac{1}{4}\sqrt\frac{\muav}{\Lav}}^{-2(N-1)} + 2\cbraces{1 + 2\sqrt\frac{\Lav}{\muav}}\delta
\end{align*}
Here in $\circledOne$ we used the definition of $L, \mu$ in \eqref{eq:L_mu_def}: $L = 2\Lav,~ \mu = \frac{\muav}{2}$.
For $\eps$-accuracy:
\begin{align*}
&\Lav\norm{\ol u^0 - x^*}^2\cbraces{1 + \frac{1}{4}\sqrt\frac{\muav}{\Lav}}^{-2(N-1)} \le \frac{\eps}{2} \longrightarrow~ N\ge 1 + \sqrt{\frac{\Lav}{\muav}} \log\cbraces{\frac{2\Lav\norm{\ol u^0 - x^*}^2}{\eps}} \\
&2\cbraces{1 + 2\sqrt\frac{\Lav}{\muav}}\delta \le \frac{\eps}{2} \longrightarrow~ \delta' \le \frac{n\eps}{2}\cbraces{1 + 2\sqrt\frac{\Lav}{\muav}}^{-1} \cbraces{\Lmax^2 \cbraces{\frac{1}{\Lav} + \frac{2}{\muav}} + \Lmax - \mumin}^{-1}
\end{align*}
It is sufficient to choose
\begin{align*}
&N = 2\sqrt\frac{\Lav}{\muav} \log\cbraces{\frac{2\Lav\norm{\ol u^0 - x^*}^2}{\eps}} \\
&\delta' = \frac{n\eps}{2} \cdot \frac{1}{2} \cdot \frac{1}{2}\sqrt\frac{\muav}{\Lav} \cdot \cbraces{4\frac{\Lmax^2}{\muav}}^{-1}
= \frac{n\eps}{32} \frac{\muav^{3/2}}{\Lav^{1/2} \Lmax^2}
\end{align*}
Let us estimate the term $\frac{D}{\delta'}$ under $\log$.
\begin{align*}
\sqrt\frac{D}{\delta'} 
&= \cbraces{\frac{2\Lmax}{\sqrt{L\mu}} + 1} + \frac{\Lmax}{\mu} \sqrt{n} \cbraces{\frac{\norm{\ol u^0 - x^*}^2}{\delta'} + \frac{8}{\sqrt{L\mu}}}^{1/2} + \frac{2\norm{\nabla F(\mX^*)}}{\sqrt{L\mu}\sqrt{\delta'}} \\
&\le \frac{3\Lmax}{\sqrt{\Lav\muav}} + \frac{2\Lmax\sqrt n}{\muav} \cbraces{ \sqrt{\frac{\norm{\ol u^0 - x^*}^2}{n\eps}\cdot \frac{32\Lav^{1/2}\Lmax^2}{\muav^{3/2}}} + \sqrt{\frac{8}{\sqrt{\Lav\muav}}} } + \frac{2\norm{\nabla F(\mX^*)}}{\sqrt{\Lav\muav}}\cdot \frac{\sqrt{32}}{\sqrt{n\eps}} \frac{\Lav^{1/4}\Lmax}{\muav^{3/4}} \\
&= \frac{3\Lmax}{\sqrt{\Lav\muav}} + \frac{8\sqrt{2} \Lmax^2\Lav^{1/4} \norm{\ol u^0 - x^*}}{\muav^{7/4}\sqrt{\eps}} + \frac{4\sqrt{2}\Lmax\sqrt{n}}{\Lav^{1/4}\muav^{5/4}} + \frac{4\sqrt{2}\norm{\nabla F(\mX^*)}\cdot \Lmax}{\Lav^{1/4}\muav^{5/4}\sqrt{n\eps}} \\
&= \frac{\Lmax}{\Lav^{1/2}\muav} \sbraces{ 3\sqrt\muav + \frac{8\sqrt{2}\Lmax\norm{\ol u^0 - x^*}}{\sqrt\eps}\cbraces{\frac{\Lav}{\muav}}^{3/4} + 4\sqrt{2n}\cbraces{\frac{\Lav}{\muav}}^{1/4} + \frac{4\sqrt{2}\norm{\nabla F(\mX^*)}}{\sqrt{n\eps}}\cbraces{\frac{\Lav}{\muav}}^{1/4} } \\
&= \frac{D_1}{\sqrt\eps} + D_2,
\end{align*}
where $D_1, D_2$ are defined in \eqref{eq:log_coefs_strongly_conv}.
Finally, the total number of iterations is
\begin{align*}
N_{\text{tot}} 
&= N\cdot T = 2\sqrt\frac{\Lav}{\muav} \log\cbraces{\frac{2\Lav\norm{\ol u^0 - x^*}^2}{\eps}}\cdot \frac{\tau}{2\lambda}\cdot 2\log\sqrt\frac{D}{\delta'} \\
&= 2\sqrt{\frac{\Lav}{\muav}} \frac{\tau}{\lambda} \cdot \log\cbraces{\frac{2\Lav\norm{\ol u^0 - x^*}^2}{\eps}} \log\cbraces{\frac{D_1}{\sqrt\eps} + D_2}.
\end{align*}

\end{document}